\title[Classifying Isometries]{Classifying Isometries}
\author{Lillian MacArthur}
\author{Honglin Zhu}
\begin{document}

\begin{abstract}
An isometry is a geometric transformation that preserves distances between pairs of points. We present methods to classify isometries in the Euclidean plane, and extend these methods to spherical, single elliptical, and hyperbolic geometry. We then classify all isometries of the plane equipped with the $\ell_p$ metric for any $p \geq 1$ and $p = \infty$.
\end{abstract}

\maketitle

\section{Introduction}
Transformational geometry is an area that explores mappings of geometric spaces to themselves, and the properties that can be broken or preserved under these maps. One of the most important types of these maps is an isometry, in which the distance between any two points is preserved. Other mappings such as affine transformations, conformal mappings, and contractions are also featured in transformational geometry. For more discussions on transformational geometry, see Chapter~$5$ in \cite{sibley2015}. 

Classifying the isometries of a space is obviously a useful venture, but in order to give an isometry of a certain space, we must first specify what distance means. For example, when we talk about distances in the plane, we usually mean the Euclidean distance, where the distance between points $(x_1, y_1)$ and $(x_2, y_2)$ is given by $(x_2-x_1)^2+(y_2-y_1)^2$. However, there are many other possible distance functions that the plane can be equipped with. To formalize the properties of a distance function that a space can have, we define metric spaces. 

\begin{definition}
    A \textit{metric space} $(X, d)$ is a set $X$ equipped with a distance function $d$ which satisfies the following conditions:
    \begin{itemize}
        \item $d(x,x) = 0$;
        \item  $d(x,y) = d(y,x)$ (Symmetry);
        \item  $d(x,y) \ge 0$ (Positivity);
        \item  $d(x,y) + d(y,z) \ge d(x,z)$ (Triangle Inequality).
    \end{itemize}
\end{definition}

The first three conditions are intuitive to our understanding of distances. The fourth one, however, comes from the fact the distance between two points should always be shorter than a path that takes a detour to a third point. This can be seen in that for any triangle $ABC$, the distance from $A$ to $B$ must be at most the distance from $A$ to $C$ added to the distance from $C$ to $B$, hence the name triangle inequality.

\begin{example}
    One metric other than the Euclidean metric on the real plane is the taxicab metric. In taxicab geometry, the distance between any two points is given by the sum of their $x$- and $y$-displacements: 
    \[
    d_T((x_1,y_1), (x_2, y_2)) = |x_2-x_1| + |y_2-y_1|.
    \]
    The first three conditions for the distance function are trivially satisfied. The triangle inequality holds because 
    \begin{align*}
        d_T((x_1, y_1),(x_2, y_2)) + d_T((x_2, y_2),(x_3, y_3)) 
        &= |x_2-x_1| + |y_2-y_1| + |x_3-x_2| + |y_3-y_2| \\
        &\ge |x_3-x_1| + |y_3-y_1| \\
        &= d_T((x_1, y_1),(x_3, y_3)).
    \end{align*}
    Thus, the plane equipped with the taxicab metric is indeed a metric space. 
\end{example}

Taxicab geometry is a great example of a metric space that uses a completely different understanding of distance than Euclidean geometry has. Its name comes from the idea of a taxicab in a large city where all the streets run in a grid, so it can only get around by travelling parallel to the $x$- or $y$-axis. Then, the distance the taxicab must travel is equal to the $x$-displacement plus the $y$-displacement. Taxicab geometry can be very useful in urban planning.

With the definition of a metric space, we are now ready to give a formal definition of an isometry.
\begin{definition}
    An \textit{isometry} of a metric space $(X, d)$ is a bijective map $\varphi: X \to X$ such that the distance function is preserved: for all $x, y \in X$, $d(x, y) = d(\varphi(x), \varphi(y))$.
\end{definition}

\begin{example}\label{ex_taxi_trans}
    A translation on $\mathbb{R}^n$ displaces every point by the same vector: $\varphi(x) = x + v$. This is clearly an isometry under the Euclidean metric as well as the taxicab metric. 
\end{example}

From \Cref{ex_taxi_trans} we can see that some isometries can exist in multiple different metric spaces. This suggests a connection between all the different types of isometries that we can explore.

However, to explore isometries, it would be helpful to figure out a method to represent transformations beyond defining them individually. One helpful method for this is to model transformations with matrices.

\begin{definition}
    An \textit{affine transformation} is a map $F: \mathbb{R}^n \to \mathbb{R}^n$ such that $F(x) = Mx + b$, where $M$ is some invertible $n \times n$ matrix, and $b \in \mathbb{R}^n$. 
\end{definition}

In \Cref{sec_two}, we classify isometries in Euclidean space as compositions of reflections. In \Cref{sec_three}, we move to spherical, single elliptical, and hyperbolic metric spaces, and adapt the method we used for Euclidean space. Next, in \Cref{sec_four}, we explore the isometries in Taxicab geometry further. Finally, in \Cref{sec_five}, using a weaker version of the Mazur-Ulam theorem, we classify all isometries for the plane equipped with the $\ell^p$ metric, for all $p > 1$ and $p = \infty$.

\section{Isometries in Euclidean Space}\label{sec_two}
Euclidean space is one of the most common and relevant metric spaces to examine in geometry. Euclid, its namesake, investigated geometry using only a straightedge and a compass, and built all of his theorems off of those tools. In Euclidean space, distance is measured the way we intuitively see distance, using the Pythagorean theorem-based distance formula we learn in school.

\subsection{Isometries in the Euclidean plane}
In this subsection, following \cite{sibley2015}, we give a classification of the isometries in the Euclidean plane.

\begin{lemma}\label{lem_three_points}
    Let $A, B, C \in \mathbb{R}^2$ be distinct non-collinear points. For any two points $P, P' \in \mathbb{R}^2$, if the respective distances from $P$ and $P'$ to $A, B, C$ are all equal, then $P = P'$.
\end{lemma}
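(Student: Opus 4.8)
The plan is to argue by contradiction: assume $P \neq P'$ and deduce that $A$, $B$, $C$ must all lie on a single line, contradicting the hypothesis that they are non-collinear.

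The engine of the argument is the fact that, for two \emph{distinct} points $P, P' \in \mathbb{R}^2$, the set of all points equidistant from $P$ and $P'$ is a straight line (the perpendicular bisector of the segment $PP'$). The cleanest way to see this, and the way I would present it, is to pick coordinates: write $P = (p_1, p_2)$ and $P' = (q_1, q_2)$, and let $X = (x, y)$ be a general point. Since distances are nonnegative, the condition $d(X, P) = d(X, P')$ is equivalent to $d(X, P)^2 = d(X, P')^2$, that is,
\[
(x - p_1)^2 + (y - p_2)^2 = (x - q_1)^2 + (y - q_2)^2.
\]
Expanding both sides, the $x^2$ and $y^2$ terms cancel, leaving the \emph{linear} equation
\[
2(q_1 - p_1)x + 2(q_2 - p_2)y = (q_1^2 + q_2^2) - (p_1^2 + p_2^2).
\]
Because $P \neq P'$, the coefficient vector $\bigl(2(q_1 - p_1),\, 2(q_2 - p_2)\bigr)$ is nonzero, so the solution set of this equation is a line $\ell \subset \mathbb{R}^2$.

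Now I would invoke the hypothesis directly: $d(A, P) = d(A, P')$, $d(B, P) = d(B, P')$, and $d(C, P) = d(C, P')$ all hold, so each of $A$, $B$, $C$ satisfies the linear equation above, and therefore all three lie on $\ell$. This contradicts the assumption that $A$, $B$, $C$ are non-collinear, so in fact $P = P'$.

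There is essentially no hard step here; the only point requiring care is the non-degeneracy claim — that the equidistant locus is genuinely a line and not the entire plane — and this is precisely where the hypothesis $P \neq P'$ is used, to guarantee the coefficient vector is nonzero. One could alternatively run the proof synthetically via the standard properties of perpendicular bisectors, but the coordinate computation makes the non-degeneracy transparent and keeps the argument self-contained.
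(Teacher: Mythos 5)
Your proof is correct, and at its core it is the same argument as the paper's: both reduce to the observation that if $P \neq P'$, then every point equidistant from $P$ and $P'$ lies on the perpendicular bisector of the segment $PP'$, which forces $A$, $B$, $C$ onto a common line and contradicts non-collinearity. The difference is presentational but worth noting: the paper first introduces the two circles centered at $A$ and $B$ through $P$, identifies $P'$ with their ``other'' intersection point $Q$, and only then places $A$, $B$ (as circle centers) and $C$ (as an equidistant point) on the perpendicular bisector; you apply the equidistance hypothesis symmetrically to all three points at once and verify the ``equidistant locus is a line'' fact by an explicit coordinate computation. Your version is the cleaner of the two — it needs no case analysis about whether $P$ coincides with $A$ or $B$, and it sidesteps the paper's claim that the two circles meet at ``exactly one other point,'' which is not literally true when the circles are tangent (i.e.\ when $P$ lies on line $AB$), a degenerate case the paper's write-up glosses over. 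The coordinate computation also makes the non-degeneracy (the locus is a line, not the whole plane) completely explicit, which is exactly the point where $P \neq P'$ is used.
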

\begin{proof}
Without loss of generality, suppose $P$ is distinct from $A$ and $B$. The circles centered at $A$ and $B$ going through $P$ intersect at $P$ and exactly one other point, which we denote by $Q$. Suppose for the sake of contradiction that $P' \neq P$. Then $P' = Q$ and $C$ is equidistant to $P$ and $Q$. This implies that $C$ lies on the perpendicular bisector of $P$ and $Q$. However, $A$ and $B$ both lie on this line, which leads to a contradiction, as $A, B, C$ are assumed to be non-collinear.
\end{proof}

\Cref{lem_three_points} implies that an isometry is determined by its action on three distinct non-collinear points: since if we know the images of three such points, then the image of any other point is the unique point with the appropriate distances to the images of the three points. This will become vital for classifying isometries.

In fact, this proof lets us classify all isometries in the Euclidean plane as the composition of reflections.
\begin{definition}\label{def_reflection}
    A \textit{reflection} in the plane across line $\ell$ is a map that takes each $x \in \mathbb{R}^2$ to the unique point $r(x) \in \mathbb{R}^2$ such that $\ell$ is the perpendicular bisector of the segment with endpoints $x$ and $r(x)$. 
\end{definition}

A reflection can be understood as ``flipping'' the plane across a certain line (see \Cref{fig_reflection}).

\begin{figure}[ht]
      \centering 
      \begin{tikzpicture}[scale=1]
\draw[step=1cm,gray,very thin] (-3.9,-3.9) grid (3.9,3.9);
\draw[thick,->] (-4,0) -- (4,0) node[anchor=south] {$x$};
\draw[thick,->] (0,-4) -- (0,4) node[anchor=west] {$y$};
\draw (-2,-4) -- (2,4);
\draw[thick,->] (0.5,2.25) -- (1.5,1.75);
\draw[thick,->] (0.5,-2.75) -- (-2.5,-1.25);
\end{tikzpicture}
\caption{A Reflection}\label{fig_reflection}
\end{figure}
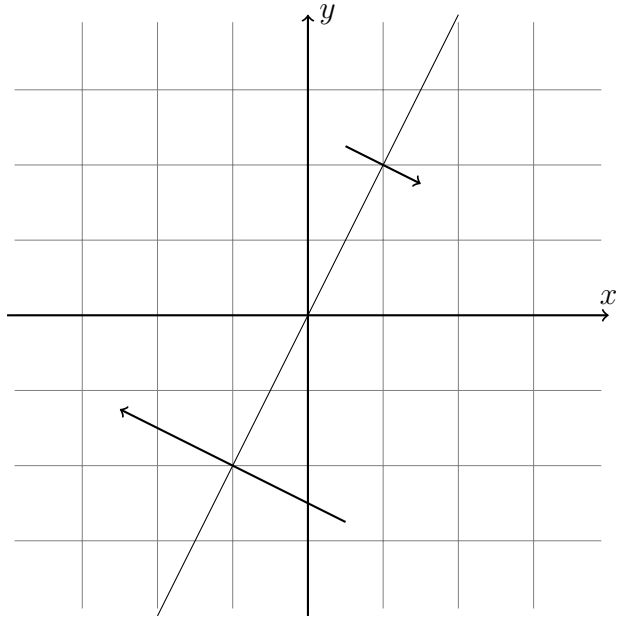

It is clear from definition that a reflection is an isometry in the Euclidean plane. With this definition, we are now ready to classify the isometries in the Euclidean plane in terms of reflections. 
\begin{theorem}\label{thm_euclidean_isometry}
    Any isometry in the Euclidean plane is the composition of at most three reflections.
\end{theorem}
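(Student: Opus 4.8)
The plan is to use Lemma~\ref{lem_three_points} as the engine: fix three distinct non-collinear points $A, B, C$, and show that any isometry $\varphi$ can be ``undone'' by composing it with at most three reflections, so that the composition fixes $A$, $B$, and $C$ and is therefore the identity. Since reflections are their own inverses, this exhibits $\varphi$ itself as a composition of at most three reflections.

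First I would record the basic tool: if $X$ and $Y$ are distinct points, the reflection across the perpendicular bisector of $XY$ is an isometry sending $X$ to $Y$. More generally, if an isometry $\psi$ moves a point $X$ to $X' \neq X$ but fixes some point $Z$, then $Z$ lies on the perpendicular bisector of $XX'$ (since $d(Z,X) = d(\psi(Z),\psi(X)) = d(Z,X')$); hence composing $\psi$ with the reflection $r$ across that perpendicular bisector produces an isometry $r\circ\psi$ that now fixes $X$ and still fixes $Z$ — reflecting $Z$ does nothing because $Z$ is on the mirror line. This ``each reflection fixes one more point without disturbing those already fixed'' observation is the heart of the argument.

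Now carry out the reduction in three stages. Let $\varphi$ be an arbitrary isometry and set $A' = \varphi(A)$. If $A' \neq A$, let $r_1$ be the reflection across the perpendicular bisector of $AA'$, so $r_1\circ\varphi$ fixes $A$; if $A' = A$, take $r_1$ to be the identity (or simply skip it). Next let $B'$ be the image of $B$ under $r_1\circ\varphi$. Since this map fixes $A$, we have $d(A,B') = d(A,B)$, so $A$ lies on the perpendicular bisector of $BB'$; letting $r_2$ be the reflection across that line (or the identity if $B' = B$), the map $r_2\circ r_1\circ\varphi$ fixes both $A$ and $B$. Repeating once more with $C$: its image $C'$ under $r_2 \circ r_1 \circ \varphi$ is equidistant from the fixed points $A$ and $B$, so both $A$ and $B$ lie on the perpendicular bisector of $CC'$, and reflecting across that line (call it $r_3$) gives a map $r_3 \circ r_2 \circ r_1 \circ \varphi$ fixing all of $A$, $B$, $C$. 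By Lemma~\ref{lem_three_points} (applied with $P' = P$ the given points, i.e.\ an isometry fixing three non-collinear points fixes every point), this composition is the identity. Therefore $\varphi = r_1^{-1} \circ r_2^{-1} \circ r_3^{-1} = r_1 \circ r_2 \circ r_3$, a composition of at most three reflections.

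The one point requiring a little care — and the main obstacle — is the degenerate case analysis: I must make sure that at each stage the two points defining the perpendicular bisector are genuinely distinct (otherwise the reflection is undefined), which is exactly why I phrase each step as ``if the point is already fixed, use the identity instead.'' I should also check that introducing identity maps is harmless for the count (``at most three''), and verify the non-collinearity hypothesis is preserved — it is, since $A, B, C$ are fixed throughout and were chosen non-collinear at the outset. Finally I would note that one cannot always do better than three: an orientation-reversing isometry such as a glide reflection is not a single reflection or a composition of two reflections, so the bound is sharp, though proving sharpness is not required by the statement.
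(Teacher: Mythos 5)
Your proof is correct and follows essentially the same strategy as the paper's: successively reflect across perpendicular bisectors to match up three non-collinear points, using the key observation that already-matched points are equidistant from the next point and its image and hence lie on the mirror line. The only cosmetic difference is direction — you compose reflections with $\varphi$ to reduce it to the identity, while the paper builds up a composition of reflections sending $A,B,C$ to their images and then invokes Lemma~\ref{lem_three_points} — and your explicit handling of the degenerate already-fixed cases is a welcome bit of extra care.
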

\begin{proof}
Suppose $\varphi: \mathbb{R}^2 \to \mathbb{R}^2$ is a Euclidean isometry. Let $A, B, C$ be distinct non-collinear points with images $\varphi(A), \varphi(B), \varphi(C)$. By \Cref{lem_three_points}, it suffices to show that $A, B, C$ can be mapped to their respective images by at most three mirror reflections. 

Define $\mu_A$ as the reflection along the perpendicular bisector of $A$ and $\varphi(A)$, which maps $A$ to $\varphi(A)$. Suppose $\mu_A(B) = B'$, $\mu_A(C) = C'$. If $B'= B$ and $C'=C$, we are finished. Otherwise, without loss of generality we can let $B' \ne \varphi(B)$. Define $\mu_B$ as the reflection across the perpendicular bisector of $B'$ and $\varphi(B)$, which similarly maps $B'$ to $\varphi(B)$. Because $\mu_B$ is a reflection which is an isometry, the distance from $A$ to $B$ must equal the distance from $\varphi(A)$ to $B'$, and because the total transformation is an isometry, the distance from $A$ to $B$ must equal the distance from $\varphi(A)$ to $\varphi(B)$, so all three distances are equivalent and $\varphi(A)$ is equidistant from $B'$ and $\varphi(B)$. Therefore, $\varphi(A)$ must lie on the perpendicular bisector of $B'$ and $\varphi(B)$, or the line of reflection of $\mu_B$, and remain fixed under this reflection. 

For point $C'$, however, this is not necessarily true, so we denote the point $\mu_B(C')$ as $C''$. If $C''$=$\varphi(C)$, we are finished, and the isometry can be expressed as $\mu_B \circ \mu_A$. Otherwise, define $\mu_C$ to be the reflection across the perpendicular bisector of $C''$ and $\varphi(C)$, which maps $C''$ to $\varphi(C)$. By similar reasoning as above, $\varphi(A)$ and $\varphi(B)$ are held fixed. Therefore, this isometry can be represented as $\mu_C \circ \mu_B \circ \mu_A$.
\end{proof}

This is an invaluable result; it gives us a complete classification of isometries in the Euclidean plane by expressing it as a composition of at most three reflections:
\begin{itemize}
    \item A single reflection will produce a reflection.
    \item 2 reflections with intersecting lines of reflection will produce a rotation around the intersection point.
    \item 2 reflections with parallel lines of reflection will produce a translation (that translates with a distance twice as long as that between the lines of reflection and perpendicular to them).
    \item 3 reflections with parallel lines of reflection will produce another reflection.
    \item All other combinations of 3 reflections are a \textit{glide reflection}, or a composition of translation and a reflection. 
\end{itemize}

Rotations and translations are \textit{direct isometries} which preserve orientation, while reflections and glide reflections are \textit{indirect isometries} and do not. In this system of classification, we can see that any isometry composed of an odd number of mirror reflections is indirect, and any isometry composed of an even number is direct. This intuitively makes sense, as every reflection flips orientation; an even number puts it right again, while an odd number leaves it flipped.

\subsection{Isometries in higher dimensional Euclidean space}
Metric spaces don't have to be flat; they can be in three dimensional or even higher dimensional space. Though spaces of four dimensions or higher are hard for us to visualize, they can have important implications in math, science, and computer programming, so they are important to explore. Our conclusion above can be easily generalized to any $n$-dimensional space $\mathbb{R}^n$, as long as we remember that in $\mathbb{R}^n$, reflections are across a $(n-1)$-dimensional hyperplane, not a line. In three dimensions, this means they're across a plane. 

\begin{theorem}\label{thm_euc_ndim_isometry}
    Any isometry in $n$-dimensional Euclidean space is the composition of at most $n+1$ reflections.
\end{theorem}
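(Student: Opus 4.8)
The plan is to lift the proof of \Cref{thm_euclidean_isometry} essentially verbatim, replacing ``three non-collinear points'' by ``$n+1$ affinely independent points'' and ``perpendicular bisector line'' by ``perpendicular bisector hyperplane.'' The first step is to establish the $n$-dimensional analogue of \Cref{lem_three_points}: if $A_0, A_1, \dots, A_n \in \mathbb{R}^n$ are affinely independent (equivalently, not all contained in a common hyperplane) and $P, P' \in \mathbb{R}^n$ have equal distances to each $A_i$, then $P = P'$. The argument is unchanged in spirit: the set of points equidistant from two distinct points $P \ne P'$ is precisely the perpendicular bisector hyperplane of the segment $PP'$, an affine subspace of dimension $n-1$; if every $A_i$ were equidistant from $P$ and $P'$, then all $n+1$ of them would lie on that hyperplane, contradicting affine independence. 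Since $n+1$ affinely independent points exist (say, the origin together with the standard basis vectors), this shows an isometry of $\mathbb{R}^n$ is determined by the images of any such configuration.

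The second step is the iterative reflection construction. Given an isometry $\varphi$, fix affinely independent points $A_0, \dots, A_n$ and build a sequence $\psi_0, \psi_1, \dots, \psi_{n+1}$ where $\psi_0 = \mathrm{id}$ and each $\psi_{j+1}$ is obtained from $\psi_j$ by composing with at most one reflection, maintaining the invariant that $\psi_j$ is a composition of at most $j$ reflections with $\psi_j(A_i) = \varphi(A_i)$ for all $i < j$. At stage $j$, if $\psi_j(A_j) = \varphi(A_j)$ already, set $\psi_{j+1} = \psi_j$; otherwise let $H$ be the perpendicular bisector hyperplane of $\psi_j(A_j)$ and $\varphi(A_j)$, let $\mu$ be the reflection across $H$, and set $\psi_{j+1} = \mu \circ \psi_j$. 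The crucial observation, exactly as in the planar case, is that $H$ contains $\varphi(A_0), \dots, \varphi(A_{j-1})$: since $\psi_j$ and $\varphi$ are isometries and $\psi_j(A_i) = \varphi(A_i)$ for $i < j$, we have $|\varphi(A_i) - \psi_j(A_j)| = |A_i - A_j| = |\varphi(A_i) - \varphi(A_j)|$, so $\varphi(A_i)$ is equidistant from the two points defining $H$ and hence lies on $H$. Thus $\mu$ fixes $\varphi(A_0), \dots, \varphi(A_{j-1})$ pointwise and sends $\psi_j(A_j)$ to $\varphi(A_j)$, so the invariant is preserved. After stage $j = n$ we obtain $\psi_{n+1}$, a composition of at most $n+1$ reflections that agrees with $\varphi$ on all of $A_0, \dots, A_n$; by the generalized lemma, $\psi_{n+1} = \varphi$.

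I expect the only genuine subtlety to be the induction bookkeeping --- verifying that the reflection introduced to correct $A_j$ does not disturb the already-placed images $\varphi(A_0), \dots, \varphi(A_{j-1})$ --- which is precisely the equidistance computation above, and that each perpendicular bisector really is a codimension-one hyperplane so that reflection across it is a well-defined isometry fixing it pointwise. Both are routine once stated. It is also worth noting, as with glide reflections in the plane, that the bound $n+1$ cannot in general be lowered: an isometry whose linear part fixes only a line and whose translational component lies along that line (a ``screw motion'' along an axis) requires all $n+1$ reflections.
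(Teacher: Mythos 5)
Your proposal is correct and follows essentially the same route the paper sketches: generalize \Cref{lem_three_points} to $n+1$ affinely independent points and iterate the perpendicular-bisector reflection construction of \Cref{thm_euclidean_isometry}, checking at each stage that the new mirror hyperplane contains the already-placed images. Your only (harmless) deviation is proving the generalized lemma directly via the perpendicular bisector hyperplane rather than through the sphere-intersection fact the paper cites, and your closing remark on the sharpness of the bound $n+1$ is a nice addition not present in the paper.
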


The proof \Cref{thm_euc_ndim_isometry} follows the same argument as in that of \Cref{thm_euclidean_isometry}. An analog of \Cref{lem_three_points} can be established with the fact that the intersection of two $(n-1)$-dimensional spheres is either the empty set, a single point, or an $(n-2)$-dimensional sphere.

\section{Isometries in Spherical, Single Elliptical, and Hyperbolic Space}\label{sec_three}
While Euclidean space is integral to our understanding of our world and geometry, it is not the only system of geometry we can explore. Isometries can exist in any metric space; and we can change around the type of space and the type of distance function used to change the type of isometries that we can find. 

One way of doing this is by examining the parallel postulate. This was one of the five rules that Euclid came up with while beginning to study geometry, and states that for any line and any point, there exists only one line that intersects the point and is parallel to the line. Unlike some other of his postulates, like the existence of points and lines, this feels much less intuitive. 

Mathematicians, attempting to prove the parallel postulate, discovered that it cannot be proved; and when we take it away, a whole new world of geometries opens up. In these geometries, a line can have many or no lines parallel to it that all intersect a single point. We will explore three types of these geometries in this paper:
\begin{enumerate}
    \item Spherical geometry is the type of geometry we observe on the surface of a sphere. We define a line as a circle that wraps around the sphere and lies on a plane that intersects the center (a "great circle"). Then, every line has a finite length, and intersects every other line at two points. Spherical geometry has \textit{positive curvature}; space ``curves up'' so that all lines will intersect eventually.
    \item Single elliptical geometry is similar to spherical geometry, except antipodal points are considered equivalent. It can also be envisioned as a circle where lines that exist one edge come in from the opposite side. Yet another way of seeing is the set of all lines through the unit sphere. No matter how its represented, it is a fascinating geometry to explore, because it lacks the \textit{separation axiom}: a line does not divide the plane in two, and a line segment can extend on either side of a line without intersecting it. It, like spherical geometry, has positive curvature.
    \item Hyperbolic geometry is a space with negative curvature, where space ``curves away" so that a line can have infinite lines parallel to it that all intersect the same point. Parallel lines in fact have two types: sensed parallel, where the lines are at the smallest angle they can be at without intersecting, and ultraparallel lines which are not. Each line has a sensed right, sensed left, and infinite ultraparallel lines through any point. Hyperbolic geometry can be represented in many ways. One can imagine looking onto a hyperboloid from its top, so it appears to be a circle but distance steadily increases as a line moves towards the outside so it will never reach (see \Cref{fig_hyperbolic} from \cite{Tamfang}).
    \begin{figure}[ht]
        \centering
        \includegraphics[width = 0.3 \textwidth]{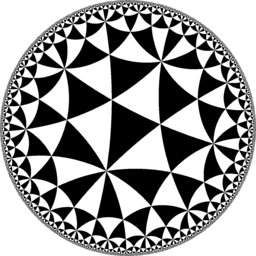}
        \caption{This drawing demonstrates the properties of hyperbolic geometry, as each line curves away from the rest}
        \label{fig_hyperbolic}
    \end{figure}
\end{enumerate}
For more discussions on non-Euclidean geometry, see Chapter~4 in \cite{sibley2015}.

Isometries can have many different forms in these geometries. As lines and the expansion of space are so different in them, the isometries can be different as well as the same as Euclidean. In order to classify them, though, we can still use the same method of reflections.

\begin{theorem}
    Isometries in spherical, single elliptical, and hyperbolic spaces can be described as the composition of 3 reflections.
\end{theorem}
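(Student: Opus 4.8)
The plan is to mirror the Euclidean argument of \Cref{thm_euclidean_isometry} almost verbatim, with the reflections across lines replaced by reflections across the appropriate notion of ``line'' (great circles in the spherical and single elliptical cases, geodesics in the hyperbolic case). First I would establish the analog of \Cref{lem_three_points}: in each of these geometries an isometry is determined by its action on three suitably generic points, so that it suffices to show any three such points can be carried to their images by at most three reflections. The key geometric input is that in each space the locus of points equidistant from two given distinct points is again a ``line'' (a great circle, or a geodesic), so that perpendicular bisectors exist and behave as in the Euclidean case; this is where the positive- or negative-curvature model is invoked, but the formal structure of the argument is identical.

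Next I would run the same three-step reduction. Given an isometry $\varphi$ and generic non-collinear points $A,B,C$, let $\mu_A$ be the reflection across the perpendicular bisector of $A$ and $\varphi(A)$, so that $(\mu_A\circ\varphi)$-type bookkeeping sends $A$ to $\varphi(A)$; if $B$ and $C$ are already fixed we stop, otherwise compose with the reflection $\mu_B$ across the perpendicular bisector of $\mu_A(B)$ and $\varphi(B)$, using the isometry property to check that $\varphi(A)$ lies on that bisector and is therefore fixed; finally, if $\mu_B(\mu_A(C))\neq\varphi(C)$, compose with the reflection $\mu_C$ across the perpendicular bisector of that point and $\varphi(C)$, and check that both $\varphi(A)$ and $\varphi(B)$ are equidistant from the two points and hence fixed. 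Then $\mu_C\circ\mu_B\circ\mu_A$ agrees with $\varphi$ on $A,B,C$, and by the determinacy lemma it equals $\varphi$.

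The main obstacle is the determinacy lemma in each geometry, and in particular the single elliptical case, where antipodal identification and the failure of the separation axiom mean one must be careful about what ``non-collinear'' and ``perpendicular bisector'' mean, and about whether two spheres (or their analogs) intersect in at most the expected set. In the spherical case one must also handle the fact that a pair of distinct non-antipodal points has a genuine perpendicular bisector (a great circle), whereas antipodal points do not, so the genericity hypothesis on $A,B,C$ must exclude the degenerate configurations; I would state the lemma with hypotheses strong enough to rule these out. Once the equidistant-locus fact and the intersection fact are in hand for each of the three geometries, the counting argument — at most three reflections — goes through exactly as before, and the statement that every such isometry is a composition of (at most, hence padding trivially to exactly) three reflections follows.
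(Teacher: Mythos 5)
Your proposal follows essentially the same route as the paper: the paper likewise reduces the claim to the three Euclidean ingredients (three-point determinacy, existence of perpendicular bisectors as equidistant loci, and the three-reflection construction) and asserts that these carry over to the spherical, single elliptical, and hyperbolic settings without proving them. Your sketch is in fact somewhat more careful than the paper's, since you explicitly flag the degenerate configurations (antipodal points, failure of the separation axiom) that the genericity hypotheses must exclude.
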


The proof of \Cref{lem_three_points} primarily relies on 3 statements; that an isometry can be defined with 3 points, that every two points have a unique perpendicular bisector, and that the composition of reflections will produce the isometry. These can all be proved to remain true in these three geometries, but we will not dive into the proofs of these assertions in this paper.   

However, the nature of these lines can be very different, depending on the geometry. Chapter~4 of \cite{sibley2015} names the isometries in these three geometries. 

In spherical geometry, all lines intersect at two points.
\begin{itemize}
    \item One reflection makes a reflection.
    \item Two reflections make a rotation around their two intersection points.
    \item Three reflections make a glide reflection.
\end{itemize}

In single elliptical geometry, all lines intersect at a single point.
\begin{itemize}
    \item One reflection makes a reflection.
    \item Two reflections or three reflections with a common intersection point make a rotation around the intersection point.
    \item Three reflections with different intersection points make a rotation composed with a reflection. 
\end{itemize}

In hyperbolic geometry, lines can intersect, be sensed parallel, or ultraparallel.
\begin{itemize}
    \item One reflection is a reflection.
    \item Two reflections with intersecting lines of reflection make a rotation.
    \item Two reflections with sensed parallel lines of reflection make a \textit{horolation}, a strange hyperbolic rotation around a point at infinity. 
    \item Two reflections with ultraparallel lines of reflection make a \textit{hyperbolic translation}, another isometry that becomes more strange in hyperbolic space, in which repeated iterations move points around in \textit{horocycles} rather than in lines.
    \item Three reflections make a reflection or a \textit{hyperbolic glide reflection}, 
\end{itemize}

These isometries follow the same general structure as Euclidean space, with translations, rotations, and reflections being composed in different ways, but their differences let whole new categories of isometries arise or eliminate ones that worked fine in Euclidean space.

\section{Isometries in Taxicab Geometry}\label{sec_four}
We introduced taxicab geometry in the introduction as an example of a different metric we can equip the real plane with. Unlike in spherical, single elliptical, or hyperbolic geometry, lines behave in exactly the same way; only distance is measured differently. Distance turns out to be just as essential to a metric space as the space is, though, as changing the distance function changes how circles are expressed, how points exist in relation to one another, and most importantly to us, whether a transformation is an isometry. 

Recall that the distance between two points $(x_1, y_1)$ and $(x_2, y_2)$ in taxicab geometry is given by $d_T((x_1, y_1), (x_2, y_2)) = |x_2-x_1| + |y_2-y_1|$. 

By defining distance as the sum of the $x$- and $y$-displacement, we get a very unintuitive but fascinating system of geometry. Lines and points remain the same, as they are not defined in terms of distance, but conic sections and other distance-related objects are very different, such as circles ellipses, and hyperbolas (see \Cref{fig_taxicab_unit_circle}, \Cref{fig_taxicab_ellipse}, and \Cref{fig_taxicab_hyperbola}).

\begin{figure}[ht]
      \centering 
      \begin{tikzpicture}[scale = 2]
    \draw[thick] (0,1) -- (1,0) -- (0,-1) -- (-1,0) -- cycle;
    \draw[thick,->] (-1.5,0) -- (1.5,0) node[anchor=north west] {$x$};
    \draw[thick,->] (0,-1.5) -- (0,1.5) node[anchor=south east] {$y$};
    \foreach \x in {-1,1}
    \draw (\x cm,1pt) -- (\x cm,-1pt) node[anchor=north] {$\x$};
    \foreach \y in {-1,1}
    \draw (1pt,\y cm) -- (-1pt,\y cm) node[anchor=east] {$\y$};
    \node at (-0.2,-0.2) {$0$};
\end{tikzpicture}
\caption{The unit circle in taxicab geometry.}\label{fig_taxicab_unit_circle}
\end{figure}
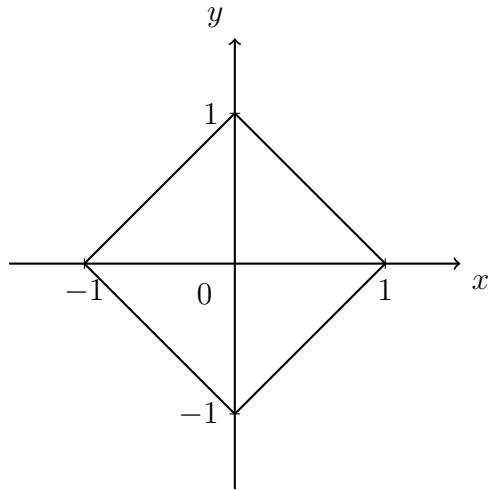

\begin{figure}[ht]
\centering
\begin{minipage}{0.45\textwidth}
\begin{tikzpicture}[scale = 1.5]
    \draw[thick] (0,-1) -- (1,-1) -- (2,0) -- (2,2) -- (1,3) -- (0,3) -- (-1,2) -- (-1,0) -- cycle;
    \draw[thick,->] (-1.5,0) -- (2.5,0) node[anchor=north west] {$x$};
    \draw[thick,->] (0,-1.5) -- (0,3.5) node[anchor=south east] {$y$};
    \foreach \x in {-1,1,2}
    \draw (\x cm,1pt) -- (\x cm,-1pt) node[anchor=north] {$\x$};
    \foreach \y in {-1,1,2,3}
    \draw (1pt,\y cm) -- (-1pt,\y cm) node[anchor=east] {$\y$};
    \node at (-0.2,-0.2) {$0$};
    \draw[fill] (0, 0) circle (0.05cm) node[anchor=south east] {$F_1$};
    \draw[fill] (1, 2) circle (0.05cm) node[anchor=south east] {$F_2$};
\end{tikzpicture}
\caption{An ellipse with foci at $(0, 0), (1, 2)$ and sum of distances to foci $5$.}\label{fig_taxicab_ellipse}
\end{minipage}
\begin{minipage}{0.45\textwidth}
\begin{tikzpicture}[scale = 1.5]
    \draw[thick] (0.25,3.5) -- (0.25, 2) -- (1, 1.25) -- (2.5, 1.25);
    \draw[thick] (-1.5,0.75) -- (0, 0.75) -- (0.75, 0) -- (0.75, -1.5);
    \draw[thick,->] (-1.5,0) -- (2.5,0) node[anchor=north west] {$x$};
    \draw[thick,->] (0,-1.5) -- (0,3.5) node[anchor=south east] {$y$};
    \foreach \x in {-1,1,2}
    \draw (\x cm,1pt) -- (\x cm,-1pt) node[anchor=north] {$\x$};
    \foreach \y in {-1,1,2,3}
    \draw (1pt,\y cm) -- (-1pt,\y cm) node[anchor=east] {$\y$};
    \node at (-0.2,-0.2) {$0$};
    \draw[fill] (0, 0) circle (0.05cm) node[anchor=south east] {$F_1$};
    \draw[fill] (1, 2) circle (0.05cm) node[anchor=south east] {$F_2$};
\end{tikzpicture}
\caption{A hyperbola with foci at $(0, 0), (1, 2)$ and difference of distances to foci $1.5$.}\label{fig_taxicab_hyperbola}
\end{minipage}
\end{figure}

As can be seen, taxicab shapes are very different from our usual perceptions of them. Additionally, the strange shape of the circle means that we cannot use our reflection-based classification, because circles can intersect at infinitely many points. On the other hand, the less symmetric shape of the circle also puts more restrictions on the possible isometries. 


\begin{lemma}\label{lem_translation}
    If $\varphi: \mathbb{R}^2 \to \mathbb{R}^2$ is a taxicab isometry, then $\varphi'(x):= \varphi(x) - \varphi(0)$ is an isometry that fixes the origin. In other words, every taxicab isometry can be expressed as a taxicab isometry that fixes the origin composed with a translation.
\end{lemma}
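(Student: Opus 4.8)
The plan is to recognize $\varphi'$ as the composition of $\varphi$ with a translation, and then invoke two facts: translations are taxicab isometries, and a composition of isometries is an isometry. Concretely, write $\tau_v \colon \mathbb{R}^2 \to \mathbb{R}^2$ for the translation $\tau_v(x) = x + v$. Then $\varphi'(x) = \varphi(x) - \varphi(0) = \tau_{-\varphi(0)}(\varphi(x))$, so $\varphi' = \tau_{-\varphi(0)} \circ \varphi$. Note carefully that the translation vector here is $-\varphi(0)$, not $+\varphi(0)$; this is essentially the only bit of bookkeeping to get right.

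First I would record that $\tau_v$ is a taxicab isometry for every $v \in \mathbb{R}^2$: it is a bijection with inverse $\tau_{-v}$, and $d_T(\tau_v(x), \tau_v(y)) = d_T(x+v, y+v) = d_T(x,y)$ since the coordinate displacements are unchanged (this is \Cref{ex_taxi_trans}). Next I would state the elementary closure fact that if $\psi_1, \psi_2$ are isometries of a metric space $(X,d)$ then so is $\psi_2 \circ \psi_1$, since a composite of bijections is a bijection and $d(\psi_2(\psi_1(x)), \psi_2(\psi_1(y))) = d(\psi_1(x), \psi_1(y)) = d(x,y)$. Applying this with $\psi_1 = \varphi$ and $\psi_2 = \tau_{-\varphi(0)}$ shows $\varphi'$ is a taxicab isometry, and $\varphi'(0) = \varphi(0) - \varphi(0) = 0$, so it fixes the origin. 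For the reformulation, I would simply solve for $\varphi$: from $\varphi(x) = \varphi'(x) + \varphi(0) = \tau_{\varphi(0)}(\varphi'(x))$ we get $\varphi = \tau_{\varphi(0)} \circ \varphi'$, exhibiting $\varphi$ as an origin-fixing taxicab isometry followed by a translation.

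There is no substantial obstacle here — the content is just that the taxicab isometry group contains all translations and is closed under composition, so the proof is a matter of stating those two facts cleanly. The lemma is worth isolating only because it reduces the classification of all taxicab isometries to the classification of those fixing the origin, which is where the genuinely new work (exploiting the non-round shape of the taxicab unit circle) will take place.
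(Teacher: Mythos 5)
Your proof is correct and matches the paper's approach; the paper simply asserts that $\varphi'$ "is straightforward to check" to be an isometry, while you fill in that check by writing $\varphi' = \tau_{-\varphi(0)} \circ \varphi$ and citing closure of isometries under composition. Nothing further is needed.
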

\begin{proof}
    We can plug $0$ into this formula to get $\varphi'(0)= \varphi(0) - \varphi(0) = (0,0)$. Therefore, $\varphi'$ preserves the origin. It is also straightfoward to check $\varphi'$ is still an isometry. 
\end{proof}

Thus, it suffices to classify taxicab isometries that fix the origin. As the unit circle is the set of all points that are at distance $1$ from the origin, an isometry fixing the origin must map the unit circle back onto itself. 

It is easy to see that any two points in the unit circle have a distance of at most two between them; since all points have distance $1$ from the origin, and at the very worst a path between two points can be made going through the origin. Two antipodal points, on the opposite sides of the unit circle from each other, must be distance $2$ from each other. However, is there any set of points that are all distance $2$ away from each other with a size greater than two?

\begin{proposition}\label{prop_four_corners}
The four corners of the taxicab unit circle are the unique four points on the unit circle such that the distance between any two consecutive points going counterclockwise around the unit circle is $2$. 
\end{proposition}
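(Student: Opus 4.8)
The plan is to distill a single clean criterion for when two points on the taxicab unit circle are exactly distance $2$ apart, and then apply that criterion to all four cyclically consecutive pairs at once.

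First I would prove the criterion: if $P = (x_P, y_P)$ and $Q = (x_Q, y_Q)$ both lie on the unit circle, then $d_T(P,Q) = 2$ if and only if $x_P x_Q \le 0$ and $y_P y_Q \le 0$. Indeed $|x_P - x_Q| \le |x_P| + |x_Q|$ and $|y_P - y_Q| \le |y_P| + |y_Q|$, and adding these gives $d_T(P,Q) \le |x_P| + |y_P| + |x_Q| + |y_Q| = 2$; equality holds in the first bound exactly when $x_P$ and $x_Q$ have opposite signs or one of them is $0$, and similarly for the second, so $d_T(P,Q) = 2$ forces both. Existence is then immediate, since for any pair of counterclockwise-consecutive corners one coordinate vanishes, and the four points of such a configuration are automatically distinct because $2 \neq 0$.

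For uniqueness, let $P_1, P_2, P_3, P_4$ be points of the unit circle listed counterclockwise with $d_T(P_i, P_{i+1}) = 2$ for all $i$ read cyclically. I would parametrize the circle counterclockwise by $t \in [0,4)$ so that $t = 0,1,2,3$ are the corners $(1,0), (0,1), (-1,0), (0,-1)$ and the open intervals $(0,1), (1,2), (2,3), (3,4)$ are the four open edges; thus ``$x \le 0$'' means ``$t \in [1,3]$'', and so on. The first claim to establish is that not every $P_i$ lies on an open edge, i.e.\ not all eight coordinates are nonzero: if they were, the criterion would force the signs of $x_1, x_2, x_3, x_4$ to alternate around the $4$-cycle, and likewise the signs of $y_1, y_2, y_3, y_4$, so $P_1$ and $P_3$ would lie on a common open edge $E$ while $P_2$ and $P_4$ lie on the open edge opposite to $E$, which is disjoint from $E$. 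But one of the two counterclockwise arcs $P_1 \to P_3$ and $P_3 \to P_1$ is exactly the sub-arc of $E$ joining the two points, and it contains $P_2$, respectively $P_4$ --- a contradiction. Hence some $P_i$ is a corner, and after cyclically relabeling and applying a rotation by a multiple of $90^\circ$ (a taxicab isometry fixing the unit circle and the set of corners and preserving the cyclic order) I may assume $P_1 = (1,0)$.

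Finally I would squeeze out the remaining three points. Applying the criterion to the pairs $(P_4, P_1)$ and $(P_1, P_2)$ gives $x_2 \le 0$ and $x_4 \le 0$; since $t_2 < t_3 < t_4$ all lie in $[1,3]$, also $x_3 \le 0$, so the inequalities $x_2 x_3 \le 0$ and $x_3 x_4 \le 0$ can only hold as equalities, and $t_3 \in (1,3)$ rules out $x_3 = 0$, leaving $x_2 = x_4 = 0$. Thus $\{P_2, P_4\} = \{(0,1), (0,-1)\}$, and $t_2 < t_4$ forces $P_2 = (0,1)$, $P_4 = (0,-1)$; then the criterion at $(P_2, P_3)$ and $(P_3, P_4)$ gives $y_3 \le 0$ and $y_3 \ge 0$, so $y_3 = 0$, and $t_3 \in (1,3)$ gives $P_3 = (-1,0)$. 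So the four points are exactly the corners. I expect the main obstacle to be not any single estimate but the bookkeeping that ties the counterclockwise ordering to the coordinate-sign conditions, and in particular the careful handling of the corner cases where a coordinate vanishes and one half of the criterion becomes vacuous; the reason the rigidity holds at all is that the hypothesis supplies four cyclic distance constraints rather than three.
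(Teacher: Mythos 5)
Your proof is correct, and it takes a genuinely different route from the paper's. The paper argues globally: for four points on the taxicab unit circle in counterclockwise order, the sum of the four consecutive distances equals the perimeter of their axes-parallel bounding rectangle, which is at most $8$ with equality exactly when the points are the four corners; the hypothesis forces that sum to be $8$, and the conclusion drops out in one stroke. You argue locally, first characterizing exactly when two points $P, Q$ of the unit circle achieve the maximal distance $2$ (namely $x_P x_Q \le 0$ and $y_P y_Q \le 0$), and then combining the four resulting sign constraints with the counterclockwise ordering to pin the points down one at a time. The paper's identity is shorter, but it quietly buries its case analysis in the assertion that the $x$-coordinates of cyclically ordered points on the circle sweep the interval $[\min x_i, \max x_i]$ exactly twice; your version pays for its explicitness with more bookkeeping, but in exchange it isolates a reusable fact (the sign criterion for diametral pairs on the taxicab circle), treats the boundary cases where a coordinate vanishes explicitly rather than implicitly, and makes visible why four cyclic constraints are needed where fewer would not suffice. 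One small remark: your normalization to $P_1 = (1,0)$ invokes rotations by multiples of $\pi/2$ as taxicab isometries; this is not circular, since those particular maps are verified directly and independently of \Cref{thm_taxicab_isometry}, but if you prefer to keep the proposition fully self-contained you can simply run the final sign argument in each of the four symmetric positions of the corner $P_1$.
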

\begin{proof}
Let $A = (x_1, y_1), B = (x_2, y_2), C = (x_3, y_3), D = (x_4, y_4)$ be any four points on the unit circle and assume they are ordered counterclockwise as such. Consider the axes-parallel rectangle $R$ that bounds these four points; namely, the rectangle bounded by the lines $y = \min y_i$, $y = \max y_i$, $x = \min x_i$, $x = \max x_i$. Observe that the perimeter of $R$ is at most $8$, with equality if and only if the four points are the four corners of the taxicab unit circle. 

We claim that $d_T(A, B) + d_T(B, C) + d_T(C, D) + d_T(D, A)$ is precisely the perimeter of $R$. To show this, it suffices to prove
\[ 
    |x_1 - x_2| + |x_2 - x_3| + |x_3 - x_4| + |x_4 - x_1| = 2 (\max x_i - \min x_i),
\]
since the corresponding statement for the $y_i$'s follow by symmetry. Since the points are arranged counterclockwise, the left hand side covers the segment between $\max x_i$ and $\min x_i$ twice, establishing the equality. 
\end{proof}

Since the pairwise distances between the four corners $(1,0), (0,1), (-1, 0), (0, -1)$ are all $2$, \Cref{prop_four_corners} implies that any isometry must permute these four points. This leaves only eight options for isometries that fix the origin.

\begin{theorem}\label{thm_taxicab_isometry}
    There are exactly eight taxicab isometries that fix the origin, namely the identity, the reflections across the $y$-axis, $x$-axis, the line $y=x$, the line $y=-x$, and the rotations by $\frac{\pi}{2}$, $\pi$, and $-\frac{\pi}{2}$.
\end{theorem}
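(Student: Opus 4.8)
The plan is to combine the structure already in place — every taxicab isometry is a translation composed with one fixing the origin (\Cref{lem_translation}), and an origin-fixing taxicab isometry permutes the four corners $(\pm 1,0),(0,\pm 1)$ of the unit circle (\Cref{prop_four_corners}) — with a rigidity statement: an origin-fixing taxicab isometry is completely determined by what it does to the four corners, and only the eight ``dihedral'' permutations of the corners actually occur. First I would sharpen the permutation claim. All six pairwise distances among the four corners equal $2$, so the image of the four corners under an origin-fixing isometry $\varphi$ is again four distinct points on the unit circle with all consecutive (counterclockwise) distances equal to $2$, which by \Cref{prop_four_corners} must be the four corners themselves. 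To see the induced permutation lies in the dihedral group of order $8$ and not in all of $S_4$, I would use that ``having a unique metric midpoint'' is an isometry invariant: each antipodal pair $\{(1,0),(-1,0)\}$, $\{(0,1),(0,-1)\}$ has the origin as its unique metric midpoint, while an adjacent pair has an entire edge of the diamond as its set of metric midpoints. Hence $\varphi$ permutes the two antipodal pairs among themselves, and the subgroup of $S_4$ preserving that partition is exactly the group of order $8$.

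The heart of the argument is the rigidity claim: a taxicab isometry $\psi$ fixing the origin and all four corners is the identity. I would prove this in two stages. Stage one shows $\psi$ fixes each coordinate axis pointwise: for $P=(t,0)$, equal distances from $\psi(P)$ to $(0,1)$ and to $(0,-1)$ force the $y$-coordinate of $\psi(P)$ to vanish, and then equal distances to $(1,0)$, $(-1,0)$, and the origin force the $x$-coordinate to equal $t$; the $y$-axis is symmetric. Stage two handles an arbitrary $P=(a,b)$: writing $\psi(P)=(a',b')$, the equation $|a-t|+|b|=|a'-t|+|b'|$ holds for \emph{every} $t\in\mathbb{R}$ because $\psi$ fixes the whole $x$-axis, and letting $t\to+\infty$ and $t\to-\infty$ and combining gives $a'=a$ and $|b'|=|b|$; the analogous computation against the (also fixed) $y$-axis gives $b'=b$. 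Hence $\psi(P)=P$.

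It then remains to assemble the pieces. One checks directly that each of the eight listed maps is a taxicab isometry fixing the origin — each is a signed permutation matrix, which visibly preserves the $\ell^1$ norm — and that the eight induce all eight dihedral permutations of the corners. Given an arbitrary origin-fixing taxicab isometry $\varphi$, choose the listed map $g$ inducing the same corner permutation as $\varphi$; then $g^{-1}\varphi$ fixes the origin and all four corners, so by the rigidity claim $g^{-1}\varphi=\mathrm{id}$ and $\varphi=g$. Since the eight listed maps are pairwise distinct, they are exactly the origin-fixing taxicab isometries.

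I expect the main obstacle to be Stage one of the rigidity claim, and, at a conceptual level, recognizing that such a claim is genuinely \emph{needed}: the Euclidean fact that an isometry is pinned down by three non-collinear points fails here, and even fixing all five of the origin and the four corners does not obviously force the identity — for instance $(2,2)$ and $(\tfrac32,\tfrac52)$ are equidistant from all five of those points. What rescues the argument is that fixing the four corners in fact forces $\psi$ to fix the infinitely many points of both coordinate axes, and that much larger fixed set does rigidify the entire plane.
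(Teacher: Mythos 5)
Your proof is correct, and its skeleton matches the paper's: factor out the translation via \Cref{lem_translation}, use \Cref{prop_four_corners} to see that an origin-fixing isometry permutes the four corners, rule out all but the eight dihedral permutations, and then show that an isometry fixing the origin and all four corners must be the identity. Where you genuinely diverge is in the two key sub-arguments. For the dihedral restriction, the paper exhibits a single witness point ($(-\tfrac12,\tfrac12)$, at distance $1$ from $(-1,0)$ and $(0,1)$, whose required image would have to be the origin) to kill one bad permutation and appeals to symmetry; your metric-midpoint invariant (antipodal pairs have a unique midpoint, adjacent pairs a whole segment of midpoints) is a cleaner, more conceptual way to see that the partition into antipodal pairs is preserved, and it generalizes readily. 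For the rigidity step, the paper first pins down the unit circle pointwise via distances to the corners, then the axes, then every concentric circle; you instead pin down the axes first and then use the fact that the \emph{entire} $x$-axis is fixed to run a $t\to\pm\infty$ argument giving $a'=a$, $|b'|=|b|$ (and symmetrically $b'=b$) for an arbitrary point $(a,b)$. Your version buys robustness: it makes explicit that finitely many fixed points do not suffice (your $(2,2)$ versus $(\tfrac32,\tfrac52)$ example is a nice illustration of why the Euclidean three-point lemma fails here) and that the infinite fixed set is what does the work, whereas the paper's circle-by-circle argument is more visual but leans on the reader to re-run the unit-circle argument at every radius. Both are complete; no gaps in yours.
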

\begin{proof}
    It is easy to check that these eight are indeed distinct taxicab isometries fixing the origin. 
    
    First we show that the only permutations that an isometry fixing the origin can induce on the four corners of the unit circle are those induced by the eight isometries in the theorem statement. By symmetry, it suffices to show that no isometry can fix $(-1, 0), (0, -1)$ and swap $(1,0), (0,1)$. Indeed, the point $(-0.5, 0.5)$ has distance $1$ to the points $(-1, 0)$ and $(0,1)$. However, the only point that is at distance $1$ to their images $(-1, 0)$ and $(1, 0)$ is the origin, which is not on the unit circle. 

    Let $\varphi'$ be an isometry fixing the origin. We compose it with one of the eight isometries listed in the theorem statement to obtain a new isometry $\varphi$ which fixes the four corners of the unit circle. For any point $(a, 1 - a)$ on the segment of the unit circle in the first quadrant, it has distances to $(1, 0)$ and $(0, 1)$ summing to two. No point on the unit circle outside this segment satisfies this. Also, it is the unique point on the segment with distance $2a$ to $(0, 1)$. By a similar argument for the other segments, we see that $\varphi$ must act as the identity on the unit circle.

    For any point $(x, 0)$ on the $x$-axis, its image must have distance $|1-x|$ to $(1, 0)$ and $|1+x|$ to $(-1, 0)$. It is easy to see that the image must be $(x, 0)$. Similarly, $\varphi$ acts as the identity on the $y$-axis. Thus, any circle centered at the origin has its four corners fixed by $\varphi$. By the same argument as above, this implies that the entire circle is fixed by $\varphi$. Therefore, $\varphi$ must be the identity. It follows that any isometry fixing the origin can only be one of the eight given in the problem statement.
\end{proof}
\begin{remark}
    The isometries of taxicab geometry that fix the origin form a group under composition that is isomorphic to the symmetry group of a square (in particular, the symmetry group of the taxicab unit circle). 
\end{remark}

\section{Isometries in $\ell^p$ Spaces}\label{sec_five}
The $\ell^p$ category of metric spaces contains taxicab, Euclidean, and other distance metrics for geometry. Investigating it lets us expand our knowledge of distance outside of the realistic and into the theoretical, letting us truly dive into the nature of isometries.

The plane $\mathbb{R}^2$ equipped with the $\ell^p$ metric is a metric space with the distance function given by 
\[
d_{p}((x_1, y_1), (x_2, y_2)) = (|x_2-x_1|^p+|y_2-y_1|^p)^{1/p},
\]
where $p \geq 1$. 

As $p$ varies, the nature of the metric space changes. When $p=1$, we find taxicab geometry. Euclidean geometry occurs when $p=2$. Other values of $p$ lack specific names, but we can prove wide conclusions about all well-defined values of $p$ and the isometries within all of them.

Unfortunately, for $p<1$, the distance metric is not well defined. Recall that every distance metric must obey the triangle inequality; that $d(X,Y) + d(Y,Z) \ge d(X,Z)$. However, take $p=0.5$, where the distance function would be 
\[
    (|x_2-x_1|^{0.5}+|y_2-y_1|^{0.5})^{1/0.5} = (\sqrt{|x_2-x_1|}+\sqrt{|y_2-y_1|})^2.
\]
This means that the distance from $(1,0)$ to $(0,1)$ is $4$, while the distance from each to the origin is $1$, making an impossible triangle. 

It is also possible to take $p = \infty$, where the distance is defined to be $\max \{|x_2-x_1|, |y_2-y_1|\}$. This is what the distance function approaches as $p$ goes to $\infty$. For the $\ell^{\infty}$ geometry, the unit circle is a square (see \Cref{fig_l_infinity_unit_circle}), and much like taxicab geometry which has a different square unit circle, the isometries include only the symmetries of the $\ell^{\infty}$ unit circle composed with a translation. 

The cases of the taxicab geometry and the $\ell^{\infty}$ suggest that we should use the unit circle to help us classify isometries. As can be seen in \Cref{fig_l_3_unit_circle}, the unit circle in $\ell^{p}$ essentially approaches the $\ell^{\infty}$ square as $p$ increases.

\begin{figure}[ht]
\centering
\begin{minipage}{0.45\textwidth}
\begin{tikzpicture}[scale = 2]
    \draw[thick] (1,1) -- (1,-1) -- (-1,-1) -- (-1,1) -- cycle;
    \draw[thick,->] (-1.5,0) -- (1.5,0) node[anchor=north west] {$x$};
    \draw[thick,->] (0,-1.5) -- (0,1.5) node[anchor=south east] {$y$};
    \foreach \x in {-1,1}
    \draw (\x cm,1pt) -- (\x cm,-1pt) node[anchor=north] {$\x$};
    \foreach \y in {-1,1}
    \draw (1pt,\y cm) -- (-1pt,\y cm) node[anchor=east] {$\y$};
    \node at (-0.2,-0.2) {$0$};
\end{tikzpicture}
\caption{The unit circle under the $\ell^{\infty}$ metric.}\label{fig_l_infinity_unit_circle}
\end{minipage}
\begin{minipage}{0.45\textwidth}
    \begin{tikzpicture}[scale = 2]
    \draw[thick, domain = 0:90] plot ({cos(\x)^(2/3)}, {sin(\x)^(2/3)});
    \draw[thick, domain = 0:90] plot ({-cos(\x)^(2/3)}, {sin(\x)^(2/3)});
    \draw[thick, domain = 0:90] plot ({cos(\x)^(2/3)}, {-sin(\x)^(2/3)});
    \draw[thick, domain = 0:90] plot ({-cos(\x)^(2/3)}, {-sin(\x)^(2/3)});
    \draw[thick,->] (-1.5,0) -- (1.5,0) node[anchor=north west] {$x$};
    \draw[thick,->] (0,-1.5) -- (0,1.5) node[anchor=south east] {$y$};
    \foreach \x in {-1,1}
    \draw (\x cm,1pt) -- (\x cm,-1pt) node[anchor=north] {$\x$};
    \foreach \y in {-1,1}
    \draw (1pt,\y cm) -- (-1pt,\y cm) node[anchor=east] {$\y$};
    \node at (-0.2,-0.2) {$0$};
\end{tikzpicture}
\caption{The unit circle under the $\ell^{3}$ metric.}\label{fig_l_3_unit_circle}
\end{minipage}
\end{figure}

To fully classify $\ell^p$ isometries, we must introduce the Mazur-Ulam theorem, a result proven by Stanisław Mazur and Stanisław Ulam in $1936$ which states that a surjective isometry between two normed spaces over $\mathbb{R}$ must be affine. A proof of this result is beyond the scope of this paper and can be found in \cite{jussi2003}, but we will only need a weaker version of this theorem, which assumes the spaces to be strictly convex (See \Cref{strict_convex_def}). 

\begin{definition}
    A real vector space $V$ is a \textit{normed space} if it is equipped with a norm $||\cdot||: V \to \mathbb{R}$ such that for all vectors $x,y$ and scalars $\alpha$:
    \begin{itemize}
        \item $||x|| \ge 0$;
        \item If $||x|| = 0$, then $x$ is the zero vector;
        \item $||\alpha x|| = |\alpha| \cdot ||x||$;
        \item $||x|| + ||y|| \ge ||x+y||$. (This is the Triangle Inequality in a different form).
    \end{itemize}
\end{definition}
Intuitively, the definition of a norm is just a way to measure the length of vectors. It can be checked that all the $\ell^p$ spaces are normed spaces.
\begin{remark}
    All normed spaces are automatically metric spaces, with distance function induced by the norm: $d(X,Y) := ||Y-X||$.
\end{remark}
\begin{definition}\label{strict_convex_def}
    A normed vector space $(X, || \cdot ||)$ is \textit{strictly convex} if the closed unit ball $B(0, 1) := \{x \in X| ||x|| \leq 1\}$ is strictly convex. That is, if the segment joining two distinct points on the boundary of the unit ball meets the boundary in only these two points. 
\end{definition}
We see that the $\ell^p$ spaces are strictly convex for all $1 < p < \infty$, so to deal with these cases, we only need the following weaker version of the Mazur-Ulam theorem.
\begin{theorem}\label{thm_mazur_ulam}
    An isometry of a strictly convex normed space is an affine transformation. 
\end{theorem}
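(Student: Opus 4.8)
The plan is to run the classical midpoint argument, using strict convexity precisely where the general Mazur--Ulam proof needs its more delicate reflection trick. First I would reduce to the case of a linear map. Given an isometry $\varphi$ of a strictly convex normed space $X$, set $f(x) := \varphi(x) - \varphi(0)$; since the metric induced by a norm is translation invariant, $f$ is again an isometry, and moreover $f(0) = 0$. It then suffices to prove that $f$ is $\mathbb{R}$-linear, for then $\varphi(x) = f(x) + \varphi(0)$ is an affine transformation, its linear part being invertible because $\varphi$, and hence $f$, is bijective.

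The heart of the matter is the following rigidity statement, which is the one place strict convexity is used: in a strictly convex normed space, for any two distinct points $x, y$ there is a \emph{unique} point $z$ with $\|x - z\| = \|z - y\| = \frac{1}{2}\|x - y\|$, namely $z = \frac{1}{2}(x+y)$. Existence is immediate. For uniqueness, observe that such a $z$ forces equality in the triangle inequality $\|x - z\| + \|z - y\| \ge \|x - y\|$; writing $a = z - x$ and $b = y - z$, both nonzero, the vector $\frac{a+b}{\|a\| + \|b\|}$ has norm $\frac{\|x-y\|}{\|x-y\|} = 1$ and equals the proper convex combination $\frac{\|a\|}{\|a\|+\|b\|}\cdot\frac{a}{\|a\|} + \frac{\|b\|}{\|a\|+\|b\|}\cdot\frac{b}{\|b\|}$ of the two unit vectors $a/\|a\|$ and $b/\|b\|$. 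Strict convexity of the unit ball forces these two unit vectors to coincide, so $b = \lambda a$ for some $\lambda > 0$; substituting back into $\|x-z\| = \|z-y\|$ gives $\lambda = 1$, whence $z = \frac{1}{2}(x+y)$. I expect this lemma to be the main obstacle: in a general normed space metric midpoints need not be unique (the taxicab and $\ell^\infty$ planes of the previous sections are exactly such examples), which is both why the strict convexity hypothesis appears and why the full Mazur--Ulam theorem is harder.

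With uniqueness of metric midpoints established, the remainder is formal. Because $f$ is an isometry, the point $f\!\left(\frac{1}{2}(x+y)\right)$ satisfies $\|f(x) - f\!\left(\frac{1}{2}(x+y)\right)\| = \|f\!\left(\frac{1}{2}(x+y)\right) - f(y)\| = \frac{1}{2}\|f(x) - f(y)\|$, so by uniqueness $f\!\left(\frac{1}{2}(x+y)\right) = \frac{1}{2}\bigl(f(x) + f(y)\bigr)$. Taking $y = 0$ and using $f(0) = 0$ gives $f\!\left(\frac{1}{2}x\right) = \frac{1}{2}f(x)$; combining this with the midpoint identity applied to $x+y$ yields $f(x+y) = f(x) + f(y)$, so $f$ is additive. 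An additive map is automatically $\mathbb{Q}$-linear, and $f$ is continuous (in fact $1$-Lipschitz, being an isometry), so it is $\mathbb{R}$-linear. Therefore $\varphi = f + \varphi(0)$ is affine, which completes the proof.
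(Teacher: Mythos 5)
Your proof is correct, and it isolates the role of strict convexity in exactly the same place the paper does: both arguments come down to the fact that in a strictly convex space the point realizing equality in the triangle inequality between $x$ and $y$ at a prescribed ratio is unique. Your uniqueness-of-midpoints lemma (via the normalized sum $\frac{a+b}{\|a\|+\|b\|}$ landing on the unit sphere as a proper convex combination of $a/\|a\|$ and $b/\|b\|$) is an algebraic rephrasing of the paper's geometric claim that $B(x,\lambda d)\cap B(y,(1-\lambda)d)$ is a single point. Where the two routes diverge is in the endgame: the paper proves the identity $\varphi(\lambda x+(1-\lambda)y)=\lambda\varphi(x)+(1-\lambda)\varphi(y)$ for \emph{every} $\lambda\in[0,1]$ at once and then defers the implication ``preserves convex combinations $\Rightarrow$ affine'' to the reader, whereas you establish only the case $\lambda=\tfrac12$ and then bootstrap --- midpoint preservation gives additivity, hence $\mathbb{Q}$-linearity, and continuity (automatic for an isometry) upgrades this to $\mathbb{R}$-linearity. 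Your version is the classical Mazur--Ulam midpoint route specialized to the strictly convex setting, and it is arguably more self-contained, since the final analytic step is spelled out rather than left as an exercise; the paper's version buys a slightly stronger intermediate statement (the full convex-combination identity) at the cost of an appeal to the reader. Your side remark that the $\ell^1$ and $\ell^\infty$ planes have non-unique metric midpoints is also apt: it explains why the theorem as stated genuinely needs strict convexity and why those two cases are handled separately in \Cref{thm_main}.
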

\begin{proof}
    Let $\varphi: X \to X$ be an isometry. Since an isometry is continuous, to show it is affine it suffices to prove $\varphi(\lambda x + (1 - \lambda) y) = \lambda \varphi(x) + (1 - \lambda) \varphi(y)$ for all $x, y \in X$, $\lambda \in [0,1]$. This claim is a standard exercise in real analysis and is left to the reader to verify. 

    Fix any distinct $x, y \in X$ and let $d = d(x, y) = d(\varphi(x), \varphi(y))$ be the distance between them. Denote the closed ball around a point $x$ with radius $r$ by $B(x, r)$. For any $\lambda \in [0,1]$, we claim that 
    \[
        B(x, \lambda d) \cap B(y, (1 - \lambda) d) = \{(1 - \lambda) x +  \lambda y\}.
    \]
    Indeed, $d(x, (1 - \lambda) x +  \lambda y) = ||\lambda (x - y)|| = \lambda d$ and the same computation for $y$ shows that the point is contained in the intersection of the two unit balls. On the other hand, let $z \in B(x, \lambda d) \cap B(y, (1 - \lambda) d)$. Then by the triangle inequality,
    \[
        d = d(x, y) \leq d(z, y) + d(z, x) \leq \lambda d + (1 - \lambda) d = d.
    \]
    The second inequality is an equality if and only if $z$ is on the boundary of the two closed balls. If $z \neq (1 - \lambda) x +  \lambda y$, then we have two distinct points on the boundary of two closed balls. By strict convexity, the interior of the segment between the two points must lie in the interior of the two closed balls, which is impossible since the two balls do not share any interior point as seen above. 

    Since $\varphi$ is a bijection, $\varphi((1 - \lambda) x +  \lambda y)$ is the unique point of 
    \begin{align*}
        \varphi(B(x, \lambda d) \cap B(y, (1 - \lambda) d)) 
        &= \varphi(B(x, \lambda d)) \cap \varphi(B(y, (1 - \lambda) d)) \\
        &= B(\varphi(x), \lambda d) \cap B(\varphi(y), (1 - \lambda) d).
    \end{align*}
    But clearly the right hand side contains $\lambda \varphi(x) + (1 - \lambda) \varphi(y)$, so we must have
    \[
        \varphi((1 - \lambda) x +  \lambda y) = \lambda \varphi(x) + (1 - \lambda) \varphi(y),
    \]
    as desired.
\end{proof}
Similar to \Cref{lem_translation}, any isometry in $\ell^p$ space is the composition of an isometry fixing the origin and a translation, so again it suffices to consider isometries fixing the origin.

With \Cref{thm_mazur_ulam}, we can now classify all isometries in all $\ell^p$ spaces. It turns out that we get the exact same set of isometries as in \Cref{thm_taxicab_isometry}. 
\begin{theorem}\label{thm_main}
    For any $p \geq 1$, $p \neq 2$, (including $p = \infty$), there are exactly eight isometries of $\mathbb{R}^2$ equipped with the $\ell^p$ metric that fix the origin. These are the identity, the reflections across the $y$-axis, $x$-axis, the line $y=x$, the line $y=-x$, and the rotations by $\frac{\pi}{2}$, $\pi$, and $-\frac{\pi}{2}$.
\end{theorem}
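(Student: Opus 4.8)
The plan is to treat three regimes separately: $p = 1$, $p = \infty$, and $1 < p < \infty$ with $p \neq 2$. The case $p = 1$ is precisely \Cref{thm_taxicab_isometry}. For $p = \infty$ I would note that the linear map $T(x,y) = (x+y,\, x-y)$ is a bijection of $\mathbb{R}^2$ satisfying $\max\{|x+y|,|x-y|\} = |x| + |y|$, so $T$ is an isometry from $\mathbb{R}^2$ with the $\ell^1$ metric onto $\mathbb{R}^2$ with the $\ell^\infty$ metric; consequently $\psi \mapsto T \psi T^{-1}$ is a bijection between the origin-fixing isometries of these two spaces. By \Cref{thm_taxicab_isometry} there are then exactly eight $\ell^\infty$-isometries fixing the origin, and since the eight maps listed in the statement are visibly distinct $\ell^\infty$-isometries fixing the origin, they must be exactly those eight.

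For $1 < p < \infty$ with $p \neq 2$, the $\ell^p$ space is strictly convex (as noted in the text), so \Cref{thm_mazur_ulam} forces any such isometry $\varphi$ to be affine; since it fixes the origin it is linear, $\varphi(v) = Mv$ for an invertible $2\times 2$ matrix $M$. Each of the eight listed maps is a signed permutation matrix, hence a linear isometry of $\ell^p$ fixing the origin, and these eight matrices are pairwise distinct; so everything reduces to showing that $M$ is one of them. Two preliminary observations will be used. First, $M$ maps the closed unit ball $B = \{(x,y) : |x|^p + |y|^p \le 1\}$ bijectively onto itself, so $|\det M| = 1$ and $M$ carries the unit circle $S = \partial B$ onto itself. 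Second, a short Lagrange-multiplier computation (carried out quadrant by quadrant on $x^2 + y^2$ constrained to $S$) shows that the only points of $S$ lying on the Euclidean unit circle are $\pm e_1, \pm e_2$, where $e_1, e_2$ is the standard basis: the remaining critical points are the diagonal points $(\pm 2^{-1/p}, \pm 2^{-1/p})$, whose Euclidean norm $2^{1/2 - 1/p}$ equals $1$ only when $p = 2$. This is the one place where $p \neq 2$ is used.

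The step I expect to be the main obstacle is proving that $M$ is orthogonal. I would do this with the \emph{John ellipse} of $B$, the unique ellipse of largest area contained in $B$. Since the John ellipse is equivariant under invertible linear maps and $B$ is invariant under all eight signed permutations, the John ellipse of $B$ is invariant under $(x,y) \mapsto (-x,y)$ and under $(x,y) \mapsto (y,x)$; an ellipse centred at the origin with those two symmetries is a round disk centred at the origin. As $M(B) = B$, the map $M$ sends this disk to itself, hence preserves the Euclidean norm, i.e.\ $M \in O(2)$. (Alternatively, the group of linear symmetries of $B$ is compact; if it were infinite it would contain a conjugate of $SO(2)$, forcing $B$ to be an ellipse, which is impossible for $p \neq 2$; hence it is finite, and averaging the standard inner product over it while invoking the square symmetry of $B$ shows the averaged inner product is a scalar multiple of the standard one.)

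Finally, once $M \in O(2)$ is known, it preserves the Euclidean norm and, by the first observation, maps $S$ onto $S$, so it permutes the set $\{\pm e_1, \pm e_2\}$ isolated in the second observation. An orthogonal transformation permuting $\{\pm e_1, \pm e_2\}$ sends $e_1$ to some $\pm e_i$ and then $e_2$ to a unit vector orthogonal to it, so $M$ is a signed permutation matrix; enumerating the eight signed permutation matrices reproduces exactly the identity, the four reflections, and the three rotations in the statement. This shows there are at most eight isometries fixing the origin, and together with the verification that all eight occur it completes the proof.
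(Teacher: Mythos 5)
Your proof is correct, but after the common reduction via \Cref{thm_mazur_ulam} to a linear map $M$ with $|\det M|=1$, it diverges from the paper's argument at the key step. The paper pins $M$ down by an area computation: it compares the area of the image of the first-quadrant sector of the unit disk (arguing that the images of $(1,0)$ and $(0,1)$ must subtend a right angle at the origin) with that of the triangle with vertices $(0,0),(1,0),(0,1)$, and then invokes the same observation you make --- that $\pm e_1,\pm e_2$ are the only points of the $\ell^p$ unit circle at Euclidean distance $1$ from the origin when $p\neq 2$ --- to force $M$ to permute those four points. You instead first prove $M\in O(2)$ via the L\"owner--John ellipse of the unit ball (invariant under the dihedral symmetries of the ball, hence a round disk centred at the origin, hence preserved by $M$), and only then use the four distinguished points to conclude that $M$ is a signed permutation matrix. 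Your route is arguably more airtight --- the paper's claim that a sector of the unit disk can have one quarter of its area only if its angular width is $\pi/2$ is asserted with minimal justification --- but it imports John's theorem (existence, uniqueness, and linear equivariance of the maximal inscribed ellipse), an external tool the paper does not use; your parenthetical compactness alternative is sketchier and would need more care. Your treatment of $p=\infty$ is also genuinely different and more complete: the paper dismisses it as ``essentially the same proof'' as the taxicab case, whereas your conjugation by the linear isometry $(x,y)\mapsto(x+y,x-y)$ between $\ell^1$ and $\ell^\infty$ derives it cleanly from \Cref{thm_taxicab_isometry} with no new casework.
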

\begin{proof}
    The cases $p = 1$ and $p = \infty$ are already done by \Cref{thm_taxicab_isometry} ($p = \infty$ following essentially the same proof). 
    
    Now suppose $1 < p < \infty$ and $p \neq 2$. By \Cref{thm_mazur_ulam}, all isometries are affine. Let $\varphi: \mathbb{R}^2 \to \mathbb{R}^2$ be an isometry fixing the origin. Then $\varphi(x) = Mx$ for some nonsingular $2 \times 2$ matrix $M$. 

    Let $C = \{x \in \mathbb{R}^2| d(0, x) = 1\}$ be the unit circle and $D = \{x \in \mathbb{R}^2| d(0, x) \leq 1\}$ the unit disk. Since $\varphi$ is affine, it ``streches'' area by a constant factor, namely $|\det (M)|$. As $\varphi$ maps $D$ onto itself, it must preserve areas. Consider the triangle $T$ with vertices $(0, 0), (1, 0), (0, 1)$ as well as the sector $S$ of $D$ bounded by the segments from the origin to $(1, 0)$ and $(0, 1)$. Then $\varphi$ preserve the areas of both $T$ and $S$. Suppose $\varphi((1, 0)) = A$ and $\varphi((1, 0)) = B$. Then $S$ must be mapped to the sector bounded by the segments from the origin to $A$ and $B$. By the symmetry of $D$ (see \Cref{fig_l_3_unit_circle}), such a sector can have the same area as $S$ if and only if it the bounding segments have an angle of $\pi/2$. 

    On the other hand, $T$ is mapped to the right triangle with vertices $(0, 0)$, $A$, $B$. However, the Euclidean distance between the origin and any point on $C$ is either at most $1$ when $p < 2$ or at least $1$ when $p > 2$, with the only four points at Euclidean distance exactly $1$ being $(0, \pm 1), (\pm 1, 0)$. Thus, for the area of $T$ to be preserved, $\varphi$ must permute these four points. The only eight matrices $M$ that satisfies this correspond to the eight isometries listed in the theorem statement. 
\end{proof}

It is perhaps a little surprising that these different metrics all admit the same set of isometries. Even though different values of $p$ can give rise to extremely different geometries, they share isometries, and this connection elucidates much about their connections.

\section*{Acknowledgements}
This paper is the result of PRIMES Circle, in which the second author mentored the first. The authors would like to thank the MIT math department and the organizers of PRIMES for providing this opportunity. 

\printbibliography
\end{document}